\newcommand{\be}{\begin{equation}}
\newcommand{\ee}{\end{equation}}
\newenvironment{pf}{\textbf{Proof:}\quad}
{\nopagebreak{\begin{flushright}{$\blacksquare$}\end{flushright}}}
\renewcommand{\O}{\mathcal{O}}
\newtheorem{thm}{Theorem}
\newtheorem{lem}{Lemma}
\newtheorem{remark}{Remark}
\numberwithin{equation}{section}
\numberwithin{lem}{section}
\newcommand{\e}{\epsilon}
\renewcommand{\O}{\mathcal{O}}
\begin{document}
\title{A proof of anomalous invasion speeds in a system of coupled Fisher-KPP equations}

\author{Matt Holzer\footnote{mholzer@gmu.edu} \\ Department of Mathematical Sciences \\ George Mason University \\ Fairfax, VA 22030 }

\date{ \today}

\maketitle
\begin{abstract}
This article is concerned with the rigorous validation of anomalous spreading speeds in a system of coupled Fisher-KPP equations of cooperative type. Anomalous spreading refers to a scenario wherein the coupling of two equations leads to faster spreading speeds in one of the components.  The existence of these spreading speeds can be predicted from the linearization about the unstable state.  We prove that initial data consisting of compactly supported perturbations of  Heaviside step functions spreads asymptotically with the anomalous speed.  The proof makes use of a comparison principle and the explicit construction of sub and super solutions.

\end{abstract}


\noindent{\bf Keywords:} anomalous spreading, invasion fronts, linear spreading speed, sub and super-solutions

\section{Introduction}

In this article we study spreading properties for the following system of coupled reaction-diffusion equations,
\begin{eqnarray}
u_t&=& du_{xx}+\alpha u(1-u)+\beta v \nonumber \\
v_t&=&  v_{xx}+v(1-v). \label{eq:main}
\end{eqnarray}
The parameters $d, \alpha$ and $\beta$ are positive.  The system was introduced in \cite{holzer-anomalous} as a prototypical example of a system of reaction-diffusion equations exhibiting anomalous spreading.  In the context of (\ref{eq:main}), anomalous spreading refers to a phenomena where the spreading speed for the $u$ component observed in the coupled regime ($\beta>0$) greatly exceeds that of the uncoupled case.  When the system is linearized about the unstable zero state these anomalous speeds are easily calculated to exist within certain regions in parameter space.  Our main goal is to prove that for some of these parameter values Heaviside step function initial data spreads with this anomalous speed in the nonlinear system (\ref{eq:main}) as well.

The equation governing both the $u$ and $v$ components in isolation (i.e. when $\beta=0$) in (\ref{eq:main}) is the Fisher-KPP equation, see \cite{fisher37,kolmogorov37}.  This equation has been studied for many decades and the dynamics are well understood.  Compactly supported, positive initial data converges to a nonlinear traveling front propagating  asymptotically with speed $2\sqrt{d\alpha}$.  This speed is also the spreading speed of solutions of the linearized equation $u_t=du_{xx}+\alpha u$.  For this reason, the Fisher-KPP equation is called {\em linearly determinate} since the nonlinear spreading speed is exactly the linear one and the Fisher-KPP front propagating at this speed is referred to as a {\em pulled front}.

It turns out that (\ref{eq:main}), like the Fisher-KPP equation, is a linearly determinate system in the sense that the nonlinear spreading speeds can be computed from the linearized equation. Spreading speeds for the linearized system can be computed systematically from the associated dispersion relation by locating pinched double roots, see section~\ref{sec:linear} and \cite{vansaarloos03,holzer-criteria} for more details.  The skew-product nature of the linearization of system (\ref{eq:main}) implies that the dispersion relation for the full system is the product of the dispersion relations of the reduced system.  Pinched double roots can be computed explicitly and  there are three different spreading speeds that might be of interest: the spreading speed of the $u$ component in isolation (the Fisher-KPP speed $2\sqrt{d\alpha}$), the spreading speed of the $v$ component in isolation (the Fisher-KPP speed $2$) and the spreading speed of the $u$ component induced by the coupling to the $v$ component.  Straightforward computations in section~\ref{sec:linear} show that the largest of these three speeds depends on the parameter values $(d,\alpha)$ and can be summarized in the following table (see also Figure~\ref{fig:parameterspace}).  One observes two regions in parameter space where the fastest linear spreading speed is given by the spreading speed of the $u$ component induced by the $v$ component.  We call this speed the {\em anomalous} spreading speed.

\begin{center} \begin{tabular}{| c |  c | c |} \hline
{parameter regime}  & \text{linear spreading speed} & \text{pinched double root} \\ \hline  
 $\mathrm{I}$ &   $2\sqrt{d\alpha}$ & $\nu_u^+=\nu_u^-$ \\ \hline
 $\mathrm{II}$ &  $2$ & $\nu_v^+=\nu_v^-$ \\ \hline
 $\mathrm{III}$ &   $s_{anom}$ & $\nu_v^+=\nu_u^-$ \\ \hline
 $\mathrm{IV}$ &   $s_{anom}$ & $\nu_u^+=\nu_v^-$ \\ \hline
\end{tabular}\end{center}

With the linear spreading speeds computed, a natural question is whether these speeds are observed in the nonlinear system.  This is more complicated and it is {\em not} the case that the observed speed in the nonlinear system is simply the fastest of these linear speeds.  This issue has been explored in depth in \cite{holzer-anomalous,holzerLV,holzer-criteria} where a distinction is made between linear spreading speeds based upon the analyticity (or lack thereof) of the pointwise Green's function in a neighborhood of the singularities enforcing these linear speeds.  In \cite{holzer-anomalous}, the double roots leading to anomalous spreading in the linear system were called {\em relevant} if anomalous spreading speeds were also observed numerically in the nonlinear regime and {\em irrelevant} if the invasion speed was slower than the anomalous one in the nonlinear system despite the existence of faster spreading speeds in the linear system.  It was proven in \cite{holzer-anomalous}, that for parameters in the irrelevant regime, $\mathrm{III}$, the observed spreading speed was the largest of the two spreading speeds in isolation, $\max\{2,2\sqrt{d\alpha}\}$.  

In this article, we will prove that the nonlinear spreading speed for parameters in the relevant regime $\mathrm{IV}$ is the anomalous speed $s_{anom}$.  This result confirms numerical observations in \cite{holzer-anomalous}. The primary analytical challenge is that anomalous spreading necessarily involves different components spreading at different speeds.  In this way, a traditional traveling wave analysis is incapable of describing this phenomena.  Instead, we rely on the existence of a comparison principle for (\ref{eq:main}) and will require some details concerning the behavior of the leading edge of the $v$ component.


We review some related work.  There are several numerical studies, \cite{bell09,elliott12,holzer-anomalous}, that suggest that relevant double roots do lead to anomalous spreading speeds in the nonlinear regime.  Rigorous results in the relevant regime are, to our knowledge, few.   Freidlin \cite{freidlin91} proved that if (\ref{eq:main}) is modified by introducing a $\beta u$ term in the $v$ component then for any $\beta>0$ both components will spread with a faster speed for parameters including those in $\mathrm{III}$ and $\mathrm{IV}$.  As $\beta\to 0$, these speeds limit on the anomalous one and therefore this implies that nonlinear spreading speeds are not necessarily continuous functions of the system parameters.  The possibility of anomalous spreading speeds in partially coupled equations was first noted by Weinberger, Lewis and Li \cite{weinberger07} and bounds on the spreading speeds were obtained for example problems.  Recently, it was shown in \cite{holzer-criteria} that relevant double roots imply that any traveling front $(U(x-st),V(x-st))$, moving slower than the anomalous speed and having steep exponential decay is unstable due to the presence of an embedded resonance pole due to the relevant double root.  We also note that there is a large literature studying invasion phenomena in many different contexts.  We point the reader to \cite{vansaarloos03} for a review.  In particular, studies of coupled systems of reaction-diffusion equations have garnered significant interest.  Most related to current study are works on spreading speeds in general cooperative systems \cite{weinberger02,li05} as well as the specific example of coupled Fisher-KPP equations \cite{raugel98,ghazaryan10,iida11}.

We now state our main result.  In brief, we will show that for $(d,\alpha)\in\mathrm{IV}$, the spreading speed of (\ref{eq:main}) is exactly the anomalous speed $s_{anom}$ predicted by the linearized system.  The precise result is as follows.

\begin{thm}\label{thm:main} Define the invasion point 
\[ \kappa(t)=\sup_{x\in\mathbb{R}}\left\{ x\ | \ u(t,x)\geq \frac{1}{2}\right\}.\] 
Define the {\em selected speed}, $s_{sel}$ by 
\[ s_{sel}=\lim_{t\to \infty}\frac{\kappa(t)}{t}.\]
Consider (\ref{eq:main}) with $(d,\alpha)\in\mathrm{IV}$ and $\beta>0$.  Fix initial data $0\leq u_0(x)\leq \frac{1}{2}+\frac{1}{2}\sqrt{1+\frac{4\beta}{\alpha}}$ and $0\leq v_0(x)\leq 1$, each consisting of a compactly supported perturbation of the Heaviside step function $\left(\frac{1}{2}+\frac{1}{2}\sqrt{1+\frac{4\beta}{\alpha}}\right)H(-x)$ and $H(-x)$, respectively.  Then the selected spreading speed of the $u$ component is the anomalous one, i.e. $s_{sel}=s_{anom}$. 
\end{thm}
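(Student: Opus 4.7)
The plan is to prove Theorem~\ref{thm:main} by sandwiching the solution between a sub-solution and a super-solution that both propagate at the anomalous speed $s_{anom}$, and then invoking the comparison principle available for the cooperative system~(\ref{eq:main}). The argument naturally splits into the two matching one-sided bounds $s_{sel}\le s_{anom}$ and $s_{sel}\ge s_{anom}$. In both cases the $v$-component will be handled first using the standard Fisher--KPP theory for the (uncoupled) $v$-equation, and then the resulting bounds on $v$ will be plugged into the linear-in-$u$ forcing term $\beta v$ in the first equation of~(\ref{eq:main}).

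For the upper bound, classical Fisher--KPP results give $0\le v(t,x)\le 1$ together with a sharp leading-edge estimate of the form $v(t,x)\le C(1+x-2t)\,e^{-(x-2t)}$ for $x\ge 2t$. Using $u(1-u)\le u$, any super-solution $U^+$ for $u$ need only satisfy the linear inequality $U^+_t\ge dU^+_{xx}+\alpha U^+ +\beta\,\overline{v}$. A natural ansatz is
\[
U^+(t,x)=\min\!\Bigl( M,\; A\,(1+x-s_{anom}t)\,e^{-\nu^\ast(x-s_{anom}t)}\Bigr),
\]
where $\nu^\ast=\nu_u^+(s_{anom})=\nu_v^-(s_{anom})$ is the common decay rate at the pinched double root and $M=\tfrac12+\tfrac12\sqrt{1+4\beta/\alpha}$ is the equilibrium of $\alpha u(1-u)+\beta$. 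The polynomial prefactor is included precisely to absorb the resonance produced by the double root. Choosing $A$ large enough that $U^+(0,\cdot)$ dominates $u_0$ and verifying the differential inequality by direct substitution is then a routine calculation.

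For the lower bound the analysis is considerably more delicate, because the coupling must push $u$ forward at a speed $s_{anom}$ that is strictly faster than the speed $2$ at which the $v$-front moves. The first step is to produce a matching sharp \emph{lower} exponential tail for $v$ in a moving window ahead of its front, of the form $v(t,x)\ge c\,(1+x-2t+\tfrac32\log t)\,e^{-(x-2t+\tfrac32\log t)}$ or an analogous estimate, which follows from Bramson-type results for the scalar Fisher--KPP equation. This persistent source term is then fed into the $u$-equation, and a sub-solution $U^-$ is constructed that mimics the leading edge predicted by the pinched double root: one candidate is $U^-(t,x)=\eps\,\chi\!\bigl(x-s_{anom}t\bigr)$ for a compactly supported bump $\chi$ anchored behind the tracking point, together with a growth argument showing that the forcing $\beta v$ inflates $\eps$ until $u$ reaches the threshold $\tfrac12$. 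An equivalent route is to build a truncated exponential sub-solution $A\,(x-s_{anom}t)\,e^{-\nu^\ast(x-s_{anom}t)}$ on a finite interval and verify the differential inequality with the help of the lower bound on $v$ and the double-root identity.

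The principal obstacle will be the lower bound, specifically the matching between the two exponential scales at the pinched double root. Because $\nu_u^+(s_{anom})=\nu_v^-(s_{anom})$, the linear response of $u$ to the exponential tail of $v$ is secular: the natural particular solution carries an extra factor linear in $x-s_{anom}t$, and this must be tracked consistently against the logarithmic Bramson delay for $v$ in order to verify the sub-solution inequality without losing the sharp speed. Once this book-keeping is in place, the comparison principle for cooperative systems pins $u(t,\cdot)$ between $U^-$ and $U^+$, both of which carry the level set $\{u=\tfrac12\}$ at asymptotic speed $s_{anom}$, yielding $s_{sel}=s_{anom}$ as claimed.
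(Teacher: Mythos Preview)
Your overall framework---comparison principle with matched sub- and super-solutions---is the same as the paper's, but the lower-bound half of your plan has a genuine gap. The Bramson-type estimate you invoke for $v$, of the form $v(t,x)\ge c\,(1+x-2t+\tfrac32\log t)\,e^{-(x-2t+\tfrac32\log t)}$, is a statement about $v$ near its own front, i.e.\ for $x-2t$ bounded or at most $o(t)$. The anomalous mechanism, however, is driven by the tail of $v$ at distances of order $t$ ahead of the $v$-front: the mode $e^{\nu_v^-(\sigma)y}$ is transported at the group velocity $-2\nu_v^-(\sigma)>2$, and that is where your $u$ sub-solution must be fed. In that region the Gaussian factor $e^{-(x-2t)^2/(4t)}$ from the heat kernel dominates and the Bramson profile bound is simply false. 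The paper deals with this by building a sub-solution for $v$ from the linearized Dirichlet problem (Lemma~\ref{lem:dirichletsub}, after Hamel), which retains the Gaussian factor, and then shows that on a specific space--time wedge $[\tau_-(t),\tau_+(t)]$ propagating at the group velocity the pure exponential $e^{\nu_v^-(\sigma)(x-\sigma t)}e^{-\delta t}$ lies below $v$. That wedge, not the neighborhood of $x\approx 2t$, is where the coupling acts.

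A second, more strategic difference: you propose to build both barriers directly at speed $s_{anom}$ and absorb the double-root resonance with polynomial prefactors. The paper instead works at $s<\sigma<s_{anom}$ for the sub-solution and $s>s_{anom}$ for the super-solution; at these off-critical speeds $\nu_u^+$ and $\nu_v^-$ are distinct, the particular solution is a clean exponential with coefficient $-\beta/d_u(\nu_v^-(s),0)$, and no secular terms appear. Letting $s\to s_{anom}$ from either side then gives the result. This sidesteps entirely the ``book-keeping'' you identify as the principal obstacle. Incidentally, a sign check on your super-solution ansatz $U^+=A(1+y)e^{\nu^\ast y}$ with $\nu^\ast=\nu_u^+(s_{anom})$ shows that the linear residual is $-A\sqrt{s_{anom}^2-4d\alpha}\,e^{\nu^\ast y}<0$, which works \emph{against} the super-solution inequality; the prefactor would have to be reworked. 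The paper's $u$ sub-solution is also structurally different from your bump or truncated-exponential candidates: it concatenates a weakly decaying nonlinear Fisher--KPP front $U_r(x-st)$ with a linearized tail $U_r((\sigma-s)t)\psi(x-\sigma t,t)$, the gluing point itself moving at speed $\sigma$, so the level set $\{u=\tfrac12\}$ is carried directly by the front and no separate growth argument is needed.
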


The proof of Theorem~\ref{thm:main} involves the explicit construction of sub and super solutions and relies on the fact that (\ref{eq:main}) is cooperative and each component satisfies the comparison principle.  The sub-solution consists of a weakly decaying traveling front solution concatenated with a solution of the linearized equation.  Since it is the leading edge of the $v$ component that is driving the anomalous spreading in the $u$ component we will require some results regarding behavior of the solution in the leading edge.  We make use of a recent study \cite{hamel13} of the Fisher-KPP equation that shows that sub-solutions can be constructed from solutions of the linearized equation in a moving coordinate frame with Dirichlet boundary conditions placed at the left edge of the domain.  Using this sub-solution, we can find a wedge in space-time for which a pure exponential solution is a sub-solution for the $v$ component and in turn leverage this to find a sub-solution for the $u$ component in this region.  It is interesting to note that the wedge for which the pure exponential is a sub-solution propagates at the group velocity associated to that exponential decay rate.  This observation was previously made by a formal analysis in \cite{booty93}.

The article is organized as follows.  In section~\ref{sec:prelims}, we outline some preliminaries necessary for the proof of Theorem~\ref{thm:main}.  This includes a review of linear spreading speeds, the construction of a sub-solution for the $v$ component and a subsequent construction of the sub-solution for the $u$ component.  In section~\ref{sec:proof}, we use these sub-solutions to prove Theorem~\ref{thm:main}.  Finally, in section~\ref{sec:discussion} we discuss some generalizations and directions for future work.

\section{Preliminaries}\label{sec:prelims}
This section establishes some preliminaries necessary for the proof of Theorem~\ref{thm:main}.  In section~\ref{sec:linear}, we review the notions of linear spreading speed, envelope and group velocities.  In section~\ref{sec:subV}, we review some facts concerning the Fisher-KPP equation and establish a sub-solution for the uncoupled $v$ component. Finally, in section~\ref{sec:subU} we use the analysis in the previous two sections to establish a compactly supported sub-solution for the $u$ component.
\subsection{The linearization about the zero state}\label{sec:linear}
In this section, we consider the linearization of (\ref{eq:main}) about the unstable homogeneous state $(u,v)=(0,0)$.  We compute and discuss the significance of the linear spreading speed, envelope velocities and group velocities.  Note that much of the material in this sub-section was covered in \cite{holzer-anomalous} and we refer the reader there for a more in depth treatment.  

We transform  (\ref{eq:main}) to a moving coordinate frame via $y=x-st$ for $s>0$ and linearize about the unstable homogeneous state $(u,v)=(0,0)$, 
\be \left(\begin{array}{c} u_t \\ v_t \end{array}\right) = \left(\begin{array}{cc}
d\partial_{yy}+s\partial_y+\alpha  & \beta \\
0 & \partial_{yy}+s\partial_y+1 \end{array}\right)\left(\begin{array}{c} u \\ v \end{array}\right).  \label{eq:mainlinear}\ee

The {\em linear spreading speed} associated to (\ref{eq:mainlinear}) is the asymptotic speed of propagation associated to compactly supported initial data.  We refer the reader to \cite{vansaarloos03} and \cite{holzer-criteria} for more detailed discussions of the linear spreading speed and the manner in which it is computed.  We sketch the details here.  Consider solutions of (\ref{eq:mainlinear}) of the form $(u,v)^T=e^{\nu y+\lambda t}(u_0,v_0)^T$, for $\nu,\lambda\in\mathbb{C}$.  These solutions exist for those values of $\nu$ and $\lambda$ for which the dispersion relation,
\be d_s(\nu,\lambda)=(d\nu^2+s\nu+\alpha-\lambda)(\nu^2+s\nu+1-\lambda),\label{eq:disp}\ee
is equal to zero. Let $d_s(\nu,\lambda)=d_{u}(\nu,\lambda)d_v(\nu,\lambda)$ for the two factors in this dispersion relation. The linear spreading speed for a system of parabolic equations can be computed from the double roots of this dispersion relation and is defined as 
\be s_{lin}=\sup_{s\in\mathbb{R}}\left\{ d_s \ \text{has a pinched double root} \ (\nu^*,\lambda^*)  \ \text{with} \ \mathrm{Re}(\lambda^*)>0\right\},\label{eq:PDRcriterion}\ee
where $(\nu^*,\lambda^*)$ is a pinched double root if
\[ d_s(\nu^*,\lambda^*)=0, \quad \partial_\nu d_s(\nu^*,\lambda^*)=0, \quad \mathrm{Re}(\nu^\pm(\lambda))\to \pm\infty \ \text{as} \ \mathrm{Re}(\lambda)\to\infty,\]
where $\nu^\pm(\lambda)\to\nu^*$ as $\lambda\to\lambda^*$. 

The linear system (\ref{eq:mainlinear}) is triangular and the linear spreading speed can be computed explicitly.  To accomplish this we first compute the roots of the dispersion relation (\ref{eq:disp}).  These are
\begin{eqnarray*}
\nu_u^\pm(s,\lambda)&=&-\frac{s}{2d}\pm\frac{1}{2d}\sqrt{s^2-4d\alpha+4d\lambda} \\
\nu_v^\pm(s,\lambda)&=&-\frac{s}{2}\pm\frac{1}{2}\sqrt{s^2-4+4\lambda}.
\end{eqnarray*}

Pinched double roots occur for those values of $s$ and $\lambda$ for which $\nu_{u,v}^+(s,\lambda)=\nu_{u,v}^-(s,\lambda)$.  Two pinched double roots are evident for $\lambda=0$ when $\nu_{u}^+(2\sqrt{d\alpha},0)=\nu_{u}^-(2\sqrt{d\alpha},0)$ or $\nu_{v}^+(2,0)=\nu_{v}^-(2,0)$.  Note that the speeds $s_u=2\sqrt{d\alpha}$ and $s_v=2$ are the linear spreading speeds of the $u$ and $v$ component in isolation.  

A third spreading speed is possible for the $u$ component whenever $\nu_{u}^\pm(s,0)=\nu_{v}^\mp(s,0)$.  We call this spreading speed the anomalous speed, with formula,
\[ s_{anom}= \sqrt{\frac{\alpha-1}{1-d}}+\sqrt{\frac{1-d}{\alpha-1}},\]
and can be found  by direct calculation.  
This spreading speed only occurs for a subset of parameters. The   $(d,\alpha)$ parameter space can be decomposed into four regions according to which of the three pinched double roots gives rise to the linear spreading speed.  These regions were referenced in the table above and are depicted in the left panel of Figure~\ref{fig:parameterspace}.  The four regions have the following descriptions, 
\begin{eqnarray*}
\mathrm{I}&=& \left\{(d,\alpha) \ | \  \alpha\geq\frac{d}{2d-1}, \ d>\frac{1}{2} \right\} \\
\mathrm{II}&=& \left\{(d,\alpha) \ | \  \alpha\leq 2-d \right\} \\
\mathrm{III}&=& \left\{(d,\alpha) \ | \  2-d<\alpha <\frac{d}{2d-1}, d>1,  \right\} \\
\mathrm{IV}&=&  \left\{(d,\alpha) \ | \ 2-d<\alpha \ (d\leq 1/2) \ ,   2-d<\alpha <\frac{d}{2d-1} \ (1/2<d<1) \right\}.
\end{eqnarray*}

A convenient way to understand the double roots that lead to anomalous spreading speeds is to graph the $s$ and $\nu$ values for which the dispersion relations for the $u$ and $v$ components in isolation satisfy $d_u(\nu,0)=0$ and $d_v(\nu,0)$.  These graphs depict the {\em envelope velocity} of a mode $e^{\nu x}$ for $\nu\in\mathbb{R}^-$, defined as the speed at which this exponential propagates in the linear system (\ref{eq:mainlinear}).  A short computation gives,
\[ s_{env,u}=-d\nu-\frac{\alpha }{\nu}, \quad s_{env,v}=-\nu-\frac{1}{\nu}.\]
The intersection of the curves of envelope velocities for the $u$ and $v$ component are double roots and are pinched if the derivatives of the two envelope velocity curves have opposite signs.  See Figure~\ref{fig:parameterspace}.

We finally mention a third quantity that will arise in our construction of sub-solutions.  This quantity is the {\em group velocity}, which is defined in general as $s_g=-\partial_{\nu}d / \partial_\lambda d (\nu^*,\lambda^*)$, where $(\nu^*,\lambda^*)$ is a root of the dispersion relation.  When $d=d_v$ this calculation gives that the group velocity of a mode $e^{\nu x}$ in the $v$ component is $s_g(\nu)=-2\nu$.  The group velocity of the $v$ component will surface in the construction of sub-solutions for the $v$ component later in this section.  We find that the region ahead of the front interface where the $v$ component has a sub-solution given by an exponential with decay rate $\nu$ is a small interval propagating with the group velocity $-2\nu$.  See \cite{booty93} for a formal study of the role of group velocities in the dynamics of the leading edge of the Fisher-KPP invasion front.

\begin{figure}[ht]
\centering
   \includegraphics[width=0.4\textwidth]{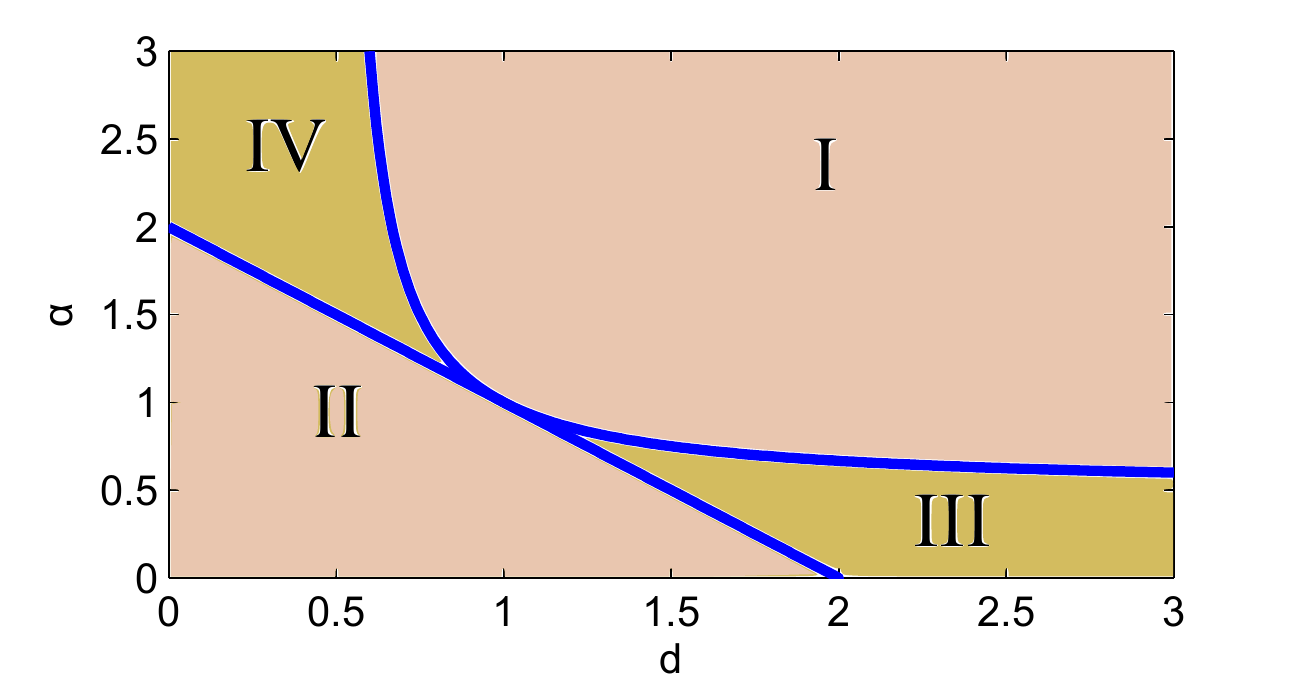}
   \includegraphics[width=0.4\textwidth]{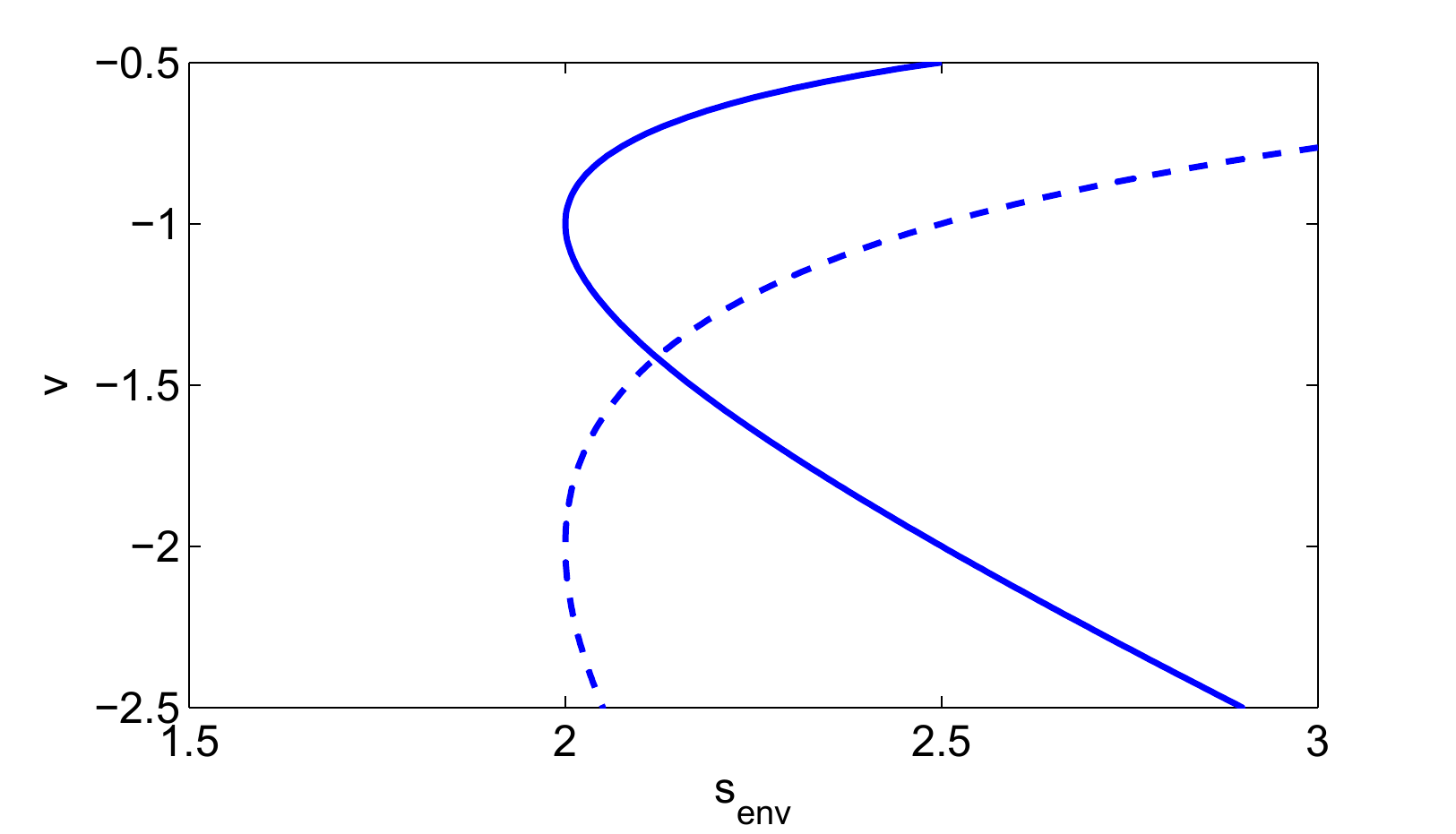}  
\caption{On the left is the $(d,\alpha)$ parameter space with regions $\mathrm{I}-\mathrm{IV}$ depicted.  Parameters in regions $\mathrm{III}$ and $\mathrm{IV}$ lead to anomalous spreading speeds in the linear system (\ref{eq:mainlinear}).  We focus on regime $\mathrm{IV}$ in this article and prove that these parameter values have nonlinear spreading speed given by the anomalous one.  On the right is the graph of the envelope velocities for representative values of $(d,\alpha)\in\mathrm{IV}$, here $d=0.5$ and $\alpha=2$.  The solid line represents the envelope velocities for $v$ and the dashed line represent the envelope velocities for $u$. The minimums of $s_{env}(\nu)$ correspond to the linear spreading speeds of $u$ and $v$ in isolation, while the point where the two curves cross gives the anomalous speed.}
\label{fig:parameterspace}
\end{figure}

\begin{remark}
The definition of the linear spreading speed using the double root criterion (\ref{eq:PDRcriterion}) is not always a faithful measure of the actual rates of spreading in a given linear system.  This can be observed in (\ref{eq:mainlinear}) where when $\beta=0$, the linear spreading speeds arising from pinched double roots involving one root from the $u$ component and one from the $v$ component are not relevant in the fully un-coupled case.  This fact was explored in depth in \cite{holzer-criteria}, where it was shown that a more accurate measure of the linear spreading speed is given by singularities of the pointwise Green's function.  Since we will always consider the case $\beta>0$, this ambiguity does not come into play here and we will use the definition given above.  We point the reader to \cite{holzer-anomalous} for details of the pointwise Green's function for the linearization (\ref{eq:mainlinear}).
\end{remark}

\subsection{A sub-solution of the Fisher-KPP equation}\label{sec:subV}
The evolution of the $v$ component is governed by the classical Fisher-KPP equation and its dynamics are well understood.  Initial data, $0\leq v(0,x)\leq 1$, a compactly supported perturbation of a Heaviside step function will evolve into a traveling front propagating with the unique, minimal KPP speed of two\footnote{with a logarithmic correction to the wavespeed, see \cite{bramson83}}.  We expect that for parameters in the anomalous regime $\mathrm{IV}$ the dynamics in the leading edge of the $v$ component is driving the anomalous spreading in the $u$ component.  We use the method of sub and super solutions, see for example \cite{fife77}, to prove that this is the case.  In this section, we construct a sub-solution for the $v$ component.

Of particular interest here is the behavior of the solution $v(t,x)$ far ahead of the front interface.  We will show that for any speed $\sigma>2$, the exponential function
\be \underline{v}(t,x)= e^{\nu_v^-(\sigma)(x-\sigma t)}e^{-\delta t},\label{eq:vsubsol}\ee
is a sub-solution for the $v$ component for $t>T^*$ on an interval $[\tau_-(t),\tau_+(t)]$, where $\tau_\pm(t)$ are functions of $\delta>0$ and $\sigma$.  We will show that the interval $[\tau_-(t),\tau_+(t)]$ is a wedge in space time whose midpoint propagates with speed $s_g(\nu_u^-(\sigma))$ in the asymptotic limit $t\to\infty$.  

We will do this with the aid of a secondary sub-solution introduced in \cite{hamel13}.
\begin{lem}(Proposition 3.1 of \cite{hamel13})\label{lem:dirichletsub}
Consider $\sigma>2$ and let $y=x-\sigma t$.  Let $q_0(y):\mathbb{R}^+\to[0,1]$ be compactly supported. There exists a function $G(t,y)$, with $|G(t,y)|<C$ for some $C(\sigma,q_0)>0$  such that 
\be q(t,y)=e^{(1-\frac{\sigma^2}{4})t}e^{-\frac{\sigma}{2}y}e^{-\frac{y^2}{4t}}G(t,y),\label{eq:dirsub}\ee
is a sub-solution of the $v$ component for $y\in[0,\infty)$. 
\end{lem}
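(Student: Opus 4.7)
The plan is to build $q$ by solving a half-line version of Fisher-KPP in the moving frame and reading the stated form off the heat kernel. In the moving frame $y = x - \sigma t$, Fisher-KPP reads $v_t = v_{yy} + \sigma v_y + v - v^2$. Apply the substitution $q(t,y) = e^{(1-\sigma^2/4)t} e^{-\sigma y/2} w(t,y)$, which is designed to absorb both the drift and the linear reaction simultaneously. A direct computation yields
\[ q_t - q_{yy} - \sigma q_y - q + q^2 \;=\; e^{(1-\sigma^2/4)t}e^{-\sigma y/2}(w_t - w_{yy}) + q^2, \]
so $q$ is a sub-solution precisely when $w_t - w_{yy} \leq -\mathcal{E}(t,y)\,w^2$ with $\mathcal{E}(t,y) := e^{(1-\sigma^2/4)t}e^{-\sigma y/2}$. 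Since $\sigma > 2$ we have $1 - \sigma^2/4 < 0$, so $\mathcal{E}$ decays exponentially in time and the nonlinear obstruction is small.

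I would next take $w$ to solve the linear half-line heat problem $w_t = w_{yy}$ on $y \geq 0$ with $w(t,0) = 0$ and $w(0,y) = q_0(y)$. By the method of images,
\[ w(t,y) = \frac{1}{\sqrt{4\pi t}}\int_0^\infty\!\left[e^{-(y-z)^2/(4t)} - e^{-(y+z)^2/(4t)}\right] q_0(z)\,dz, \]
and factoring $e^{-y^2/(4t)}$ out of the bracket gives $w(t,y) = e^{-y^2/(4t)} G(t,y)$ with
\[ G(t,y) \;=\; \frac{1}{\sqrt{\pi t}} \int_0^\infty e^{-z^2/(4t)} \sinh\!\bigl(\tfrac{yz}{2t}\bigr)\,q_0(z)\,dz. \]
This is precisely the structure announced in the lemma.

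Since an exact heat solution gives only equality in $w_t - w_{yy}$, I would convert $w$ to a strict sub-solution via a time rescaling $w \mapsto \phi(t) w$, where $\phi > 0$ satisfies $\phi'(t) \leq -M e^{(1-\sigma^2/4)t}\phi(t)^2$ with $M = \sup G$. Because $\sigma > 2$, the integral $\int_0^\infty e^{(1-\sigma^2/4)s}\,ds$ is finite, so the associated ODE admits a solution bounded both above and away from zero; the new $G$ is merely multiplied by the bounded $\phi(t)$, so the bound $|G| \leq C(\sigma,q_0)$ is preserved. With this choice, $\phi'(t) w \leq -\mathcal{E}(t,y) \phi(t)^2 w^2$ holds pointwise, which is exactly the sub-solution inequality. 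At the boundary $q(t,0) = 0$ lies below any nonnegative Fisher-KPP solution, confirming that $q$ is a sub-solution on $[0,\infty)$.

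The main obstacle will be proving the uniform bound $|G| \leq C$ over the entire half-line. The naive estimate $|\sinh x| \leq e^{|x|}$ is too weak in the regime $y \gg \sqrt{t}$, where it permits spurious exponential growth. Instead one must use the identity
\[ e^{-z^2/(4t)}\sinh\!\bigl(\tfrac{yz}{2t}\bigr) \;=\; \tfrac12 e^{y^2/(4t)}\Bigl[e^{-(z-y)^2/(4t)} - e^{-(z+y)^2/(4t)}\Bigr], \]
together with the compact support of $q_0$, to show that the growing prefactor $e^{y^2/(4t)}$ is canceled by integration against the two translated Gaussians. Closing this estimate uniformly is the technical heart of the argument.
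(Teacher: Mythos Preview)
Your approach is essentially the paper's: solve the linearized half-line problem via the heat kernel with images, then multiply by a bounded time-dependent factor solving a Bernoulli ODE to absorb the $q^2$ term. The one organizational difference is that the paper bounds the full linearized solution $\tilde q$ by $Ce^{-\omega t}$ directly (equivalently, $|w|\le\|w_0\|_\infty$ by the maximum principle for the heat equation) and uses \emph{that} constant in the ODE $A'=-Ce^{-\omega t}A^2$; this avoids needing $\sup G<\infty$ as an input, so the step you flag as ``the technical heart'' is not required to run the sub-solution argument, and the paper simply asserts $|G|<C$ at the end without a detailed estimate.
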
 

\begin{proof} See \cite{hamel13}, although we sketch the proof here for completeness.  We work in a coordinate frame moving to the right at a fixed speed $\sigma>0$.  Let $y=x-\sigma t$. 
A function $q(t,y)$ is a sub-solution if 
\[ N(q)=q_t-q_{yy}-sq_y-q+q^2\leq 0,\]
for all $(t,y)\in \mathbb{R}^+\times\mathbb{R}^+$. 
Consider the initial value problem for the linearized equation with a  Dirichlet boundary condition imposed at $y=0$,
\[ \tilde{q}_t=\tilde{q}_{yy}+\sigma \tilde{q}_y+\tilde{q}, \quad \tilde{q}(0,y)=\tilde{q}_0(y), \quad \tilde{q}(t,0)=0.\]
Asume that $\tilde{q}_0(y)$ is compactly supported.  This equation has the explicit solution,
\[ \tilde{q}(t,y)=e^{(1-\frac{\sigma^2}{4})t}e^{-\frac{\sigma}{2}y}\int_0^\infty \frac{\left(e^{-\frac{(y-y')^2}{4t}}-e^{-\frac{(y+y')^2}{4t}}\right)}{\sqrt{4\pi t}}\tilde{q}_0(y')dy'.\]
As $\tilde{q}(t,y)$ is a solution to the linearized equation it is not a sub-solution since $N(\tilde{q})=\tilde{q}^2\geq0$.  However, one can construct  a function $A(t)\geq0$ such that $q(t,y)=A(t)\tilde{q}(t,y)$ is a sub-solution.  To do this, note that $|\tilde{q}(t,y)|<Ce^{-\omega t}$ for some $\omega>0$ and some $C(q_0)$ independent of $y$.  Then if
\[ A'(t)=-Ce^{-\omega t}A^2,\]
we have that $q(t,y)$ is a sub-solution.  The solution of this differential equation can be calculated explicitly,
\[ A(t)=\frac{A_0\omega}{\omega+CA_0(1-e^{-\omega t})}.\]
Taking for example  $A_0=1$, we find that the solution is bounded away from zero as well as from above, i.e. there exists $A_1>0$ such that  $A_1<A(t)<1$.

We then have the existence of a sub-solution $q(t,y)$, which can be factored as in (\ref{eq:dirsub}) with
\[ G(t,y)=\frac{A(t)}{\sqrt{2\pi t}} \int_0^\infty \left(e^{-\frac{y'(y'-2y)}{4t}}-e^{-\frac{y'(2y+y')}{4t}}\right)\tilde{q}_0(y')dy',\]
from which we observe that $G(t,0)=0$ and $|G(t,y)|<C$ for some $C>0$.  
\end{proof}

\begin{lem} Fix $\sigma>2$, $q_0(y)$ and $G(t,y)$ from Lemma~\ref{lem:dirichletsub}.  Let $\delta>0$.  There exists $\tau_\pm(t;\delta,\sigma,q_0)$ and a $T^*(\sigma,\delta,q_0)>0$ such that the  function $\underline{v}$ is a sub-solution for $y\in[\tau_-(t),\tau_+(t)]$ and $t>T^*$. 
\end{lem}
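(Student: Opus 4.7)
The plan is to verify a pointwise sub-solution inequality for $\underline{v}$ in the moving frame, then locate the region where $\underline{v}$ is dominated by the Dirichlet sub-solution $q(t,y)$ of Lemma~\ref{lem:dirichletsub}, and finally invoke the parabolic maximum principle on that region.

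Step 1 (PDE inequality). In the moving frame $y = x - \sigma t$ the Fisher-KPP equation reads $v_t = v_{yy} + \sigma v_y + v - v^2$. With $N(w) := w_t - w_{yy} - \sigma w_y - w + w^2$, substituting $\underline{v}(t,y) = e^{\nu_v^-(\sigma) y - \delta t}$ and using that $\nu_v^-(\sigma)$ satisfies $\nu^2 + \sigma \nu + 1 = 0$ causes all the linear-in-$\underline{v}$ terms to cancel, leaving
\[
N(\underline{v}) \,=\, \underline{v}\,(\underline{v} - \delta).
\]
Thus $N(\underline{v}) \le 0$ pointwise wherever $\underline{v} \le \delta$, a condition that is automatic for $y \ge 0$ once $t$ is large, since there $\underline{v}(t,y) \le e^{-\delta t}$.

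Step 2 (boundary comparison and definition of $\tau_\pm$). For $\underline{v}$ to serve as a sub-solution via comparison with $v$, one needs $\underline{v} \le v$ on the parabolic boundary of the region. Since Lemma~\ref{lem:dirichletsub} gives $v \ge q$ on $y \ge 0$, it is enough to arrange $\underline{v}(t, \tau_\pm(t)) \le q(t, \tau_\pm(t))$. Writing out this inequality, taking logarithms, and using $\nu_v^-(\sigma) + \sigma/2 = -\tfrac{1}{2}\sqrt{\sigma^2-4}$ together with $1 - \sigma^2/4 = -(\sigma^2-4)/4$, completion of the square reduces $\underline{v} \le q$ to
\[
\bigl(y - \sqrt{\sigma^2-4}\,t\bigr)^2 \,\le\, 4t\bigl(\delta t + \log G(t,y)\bigr).
\]
Since $|G|$ is bounded and $G > 0$ in the interior of $y > 0$ (from the explicit kernel in Lemma~\ref{lem:dirichletsub}), there exists $T^* = T^*(\sigma,\delta,q_0) > 0$ such that for all $t > T^*$ the right-hand side is positive on a nonempty interval of $y$'s, whose endpoints we define to be $\tau_\pm(t)$. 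Asymptotically, $\tau_\pm(t) \sim \bigl(\sqrt{\sigma^2-4} \pm 2\sqrt{\delta}\bigr)\,t$, so in the fixed frame the midpoint of $[\tau_-(t), \tau_+(t)]$ translates with speed $\sigma + \sqrt{\sigma^2-4} = -2\nu_v^-(\sigma)$, which is precisely the advertised group velocity $s_g(\nu_v^-(\sigma))$.

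Step 3 (maximum principle and main obstacle). Combining Step 1 (interior differential inequality) with Step 2 (lateral boundary inequality $\underline{v} \le q \le v$) and using the comparison $\underline{v}(T^*, y) \le q(T^*, y) \le v(T^*, y)$ on the initial slice (possibly after enlarging $T^*$), the parabolic maximum principle yields $\underline{v} \le v$ on the wedge $\{(t,y) : t > T^*,\ \tau_-(t) \le y \le \tau_+(t)\}$, which is the asserted sub-solution property. The main technical obstacle is the correction $\log G(t,y)$: since $G$ vanishes at $y = 0$ and decays algebraically in $t$ at each fixed $y$, $\log G$ is unboundedly negative near the Dirichlet boundary, and one must choose $T^*$ large enough that $\delta t + \log G$ remains positive uniformly on the interval $[\tau_-(t), \tau_+(t)]$. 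This forces $\tau_-(t)$ to stay bounded away from zero and is what prevents the lemma from holding for all $t > 0$.
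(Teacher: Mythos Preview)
Your argument is correct and arrives at the same interval with the same leading asymptotics $\tau_\pm(t)\sim(\sqrt{\sigma^2-4}\pm 2\sqrt{\delta})\,t$, but the route differs from the paper's in a way worth noting. The paper does not check the differential inequality $N(\underline{v})\leq 0$ (your Step~1) or invoke the maximum principle on the wedge (your Step~3) at all: since $q\leq v$ already on $y>0$ by Lemma~\ref{lem:dirichletsub}, it suffices to establish the \emph{pointwise} inequality $\underline{v}(t,y)\leq q(t,y)$ on $[\tau_-(t),\tau_+(t)]$, and then $\underline{v}\leq v$ is immediate there. To locate $\tau_\pm$, the paper sets $\underline{v}=q$, takes logarithms, rescales via $y=2tz$ and $\rho=t^{-1/2}$, and applies the implicit function theorem to the resulting function $F(\rho,z)$, obtaining $\tau_\pm(t)=\sqrt{\sigma^2-4}\,t\pm 2\sqrt{\delta}\,t+2R_\pm(t^{-1/2})\,t$ with $R_\pm\to 0$. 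Your approach buys a clean conceptual split into interior-PDE and boundary-comparison pieces, which is a perfectly valid way to organize the argument; the paper's approach is shorter and, crucially, delivers the explicit remainder $R_\pm$ that is used downstream in Lemma~\ref{lem:Td} and in the region-$I_b$ estimate of Lemma~\ref{lem:usub}. If you retain your proof, you should tighten the definition of $\tau_\pm$ in Step~2 (they are the roots of the full inequality $(y-\sqrt{\sigma^2-4}\,t)^2=4t(\delta t+\log G)$, not merely where the right side is positive) and record the remainder form needed later.
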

\begin{proof}
We will compare the function $\underline{v}$ to the sub-solution in Lemma~\ref{lem:dirichletsub}.  In particular, we seek those $x$ values for which $\underline{v}(t,x)\leq q(t,x-\sigma t)$.  We work in the moving coordinate system $y=x-\sigma t$ and seek solutions to the nonlinear equation,
\[ e^{(1-\frac{\sigma^2}{4})t}e^{-\frac{\sigma}{2}y}e^{-\frac{y^2}{4t}}G(t,y)= e^{\nu y}e^{-\delta t},\]
where we simplify notation and use $\nu$ to denote $\nu_v^-(\sigma)$.  This is equivalent to,
\[ -\frac{y^2}{4t}-\left(\frac{\sigma}{2}+\nu\right)y+(1-\frac{\sigma^2}{4}+\delta)t+\log \left( G(t,y)\right)=0.\]
Rescale $y=2tz$ and divide by $-t$ to find, 
\be z^2+\left(\sigma+2\nu\right)z-(1-\frac{\sigma^2}{4}+\delta)-\frac{1}{t}\log \left( \tilde{G}(t,z)\right)=0.\label{eq:implicit} \ee
Let $\rho=t^{-1/2}$, then the left hand side of (\ref{eq:implicit}) defines an implicit function 
\[ F(\rho,z):=  z^2+\left(\sigma+2\nu\right)z-(1-\frac{\sigma^2}{4}+\delta)-\rho^2\log \left(\rho H(\rho,z)\right),\]
with 
\[ H(\rho,z)=\frac{2A(\rho^{-2})}{\sqrt{2\pi}}\int_0^\infty e^{-\frac{(y'\rho)^2}{4}}\sinh(zy')q_0(y')dy'.\]
When $\rho=0$ we find solutions for 
\[ z_\pm=-\nu-\frac{\sigma}{2}\pm\frac{1}{2}\sqrt{(\sigma+2\nu)^2+4(1-\frac{\sigma^2}{4}+\delta)},\]
which after simplification using the identity $\nu^2+\sigma\nu+1=0$ becomes
\[  z_\pm=\frac{1}{2}\sqrt{\sigma^2-4}\pm\sqrt{\delta}.\]  
Thus, $F(0,z_\pm)=0$ and $F$ is $C^1$ with $F_\rho(0,z_\pm)=0$ and $F_z(0,z_\pm)=z_\pm-z_\mp$.  The implicit function theorem implies the existence of a $\rho^*>0$ such that $z(\rho)=z_\pm+R_\pm(\rho)$ solves $F(\rho,z(\rho))=0$ for $\rho<\rho^*$.  Reverting to the $(t,y)$ variables, we have
\be \tau_\pm(t)=\sqrt{\sigma^2-4}t\pm2\sqrt{\delta}t+2R_\pm(t^{-1/2})t,\label{eq:tau}\ee
for all $t>T^*(\sigma,\delta,q_0)=\left(\frac{1}{\rho^*(\sigma,\delta,q_0)}\right)^2$.
\end{proof}

\begin{remark}
Note that the leading order term in (\ref{eq:tau}) is precisely the group velocity associated to the spatial mode $e^{\nu y}$. This can be interpreted as saying that the region in space where the solution of the Fisher-KPP equation resembles an exponential function with certain strong decay rate is an interval that propagates at the group velocity of that mode.  This observation was previously noted in \cite{booty93}.
\end{remark}

\subsection{A sub-solution for the $u$ component}\label{sec:subU}
We now turn our attention to the $u$ component and construct a compactly supported sub-solution for the $u$ component.  We do this for all wavespeeds, 
\[  \max\{2,2\sqrt{d\alpha}\}<s<s_{anom}.\]
We take the dynamics of the $v$ component to be fixed in the sense that initial data has been selected and Lemma~\ref{lem:dirichletsub} has been applied to yield a sub-solution for the $v$ component.  We will show that there exists a one-parameter family of functions $\underline{u}(t,x)$ such that 
\[ N(\underline{u})=\underline{u}_t-d\underline{u}_{xx}-\alpha (\underline{u}-\underline{u}^2) -\beta v(t,x)<0.\]
In the following section, we will show that this parameter can be selected so that $\underline{u}(t,x)<u(t,x)$ for some value of $t>0$.  
 
The sub-solution consists of a nonlinear traveling front solution with weak decay propagating with speed $s$, concatenated with a solution of the linearized dynamics of the $u$ component about the zero state.  These two sub-solutions are glued together at a unique point in the frame of reference moving with speed $\sigma$ with $s<\sigma<s_{anom}$.  That is, we consider
\be \underline{u}(t,x)=\left\{ \begin{array}{ccc} U_r(x-st) & x<\sigma t \\
  U_r((\sigma-s)t)\psi(x-\sigma t,t) & x\geq \sigma t \\
  0 & x\geq \sigma t+\Theta_+(t),\end{array}\right. \label{eq:usubsol} \ee
for some $\Theta_+(t)>0$.  Here $U_r(\cdot)$ is a nonlinear traveling front solution and $\psi(\cdot,t)$ is a solution of the linearized problem near zero. Continuity is enforced by requiring that $\psi(0,t)=1$.  There exists a one-parameter family of such functions corresponding to different translates of the weakly decaying front.  We let $r$ parameterize this family through the identity,
\[ U_r(r)=\frac{1}{2}.\]

\paragraph{Weakly decaying nonlinear fronts} 
The nonlinear traveling front $U_r(x-st)$ is a solution of the second order ordinary differential equation 
\[ dU_r''+sU_r'+\alpha(U_r-U_r^2)=0.\]
The existence of such traveling front solutions for the Fisher-KPP equation is well known, see for example \cite{aronson78}.  In the sub-critical regime where $s<2\sqrt{d\alpha}$, the fixed point at the origin has complex conjugate eigenvalues and the decay of the nonlinear front is oscillatory.  In the super-critical regime where $s>2\sqrt{d\alpha}$, the origin has two real eigenvalues and one can show by phase plane analysis that there exists a unique nonlinear front solution that approaches the origin with weak exponential decay.  This decay rate is prescribed by the dispersion relation and is $\nu_u^+(s,0)$.  The intermediate speed $s=2\sqrt{d\alpha}$ is the critical or minimal speed and the traveling front moving with this speed (referred to as the critical or Fisher-KPP front)  is the selected front for the $v$ component in isolation.

\paragraph{The function $\psi$.} In order to specify the functional form of $\psi$, we consider the linearized equation for the $u$ component with the sub-solution (\ref{eq:vsubsol}) in place of $v$.  That is, we consider,
\[ u_t=du_{yy}+\sigma u_y +\alpha u+\beta  e^{\nu_v^-(\sigma)y}e^{-\delta t} .\]
We consider a solution consisting of stationary exponential profiles and a particular solution describing the influence of the inhomogeneous term $\underline{v}$,
\[ u(t,x)=c_1 e^{\nu_u^+(\sigma)y}+c_2 e^{\nu_u^-(\sigma)y}+p(t,x).\]
Since we require weak exponential decay at the matching point, we set $c_2=0$.  To determine the particular solution, we apply the  transformation $w(t,x)=e^{\delta t}u(t,x)$ that removes the non-autonomous term from the inhomogeneity, leading to an equation for $w$, 
\[ w_t=dw_{yy}+\sigma w_{y}+(\alpha+\delta)w+\beta e^{\nu_v^-(\sigma)y}.\]
Finding the particular solution here and reverting to the original coordinates we obtain
\[ p(t,x)=-\frac{\beta}{d(\nu_v^-(\sigma))^2+\sigma\nu_v^-(\sigma)+\alpha+\delta} e^{\nu_v^-(\sigma)y}e^{-\delta t}.\]
For brevity going forward we introduce the notation,
\[ D(\nu)=d\nu^2+\sigma\nu+\alpha+\delta.\]
Note that $D(\nu_v^-(\sigma))>0$.  We are now in a position to select,
\be \psi(x-\sigma t,t)=c_1(t) e^{\nu_u^+(\sigma)(x-\sigma t)}- \frac{ \beta}{D(\nu_v^-(\sigma))} e^{\nu_v^-(\sigma)(x-\sigma t)}e^{-\delta t}.\label{eq:psi} \ee
In order for $\underline{u}(t,x)$ to be continuous at $x-\sigma t=0$ we require $\psi(0,t)=1$, which imposes 
\[ c_1(t)= \left(1+\frac{ \beta}{D(\nu_v^-(\sigma))} e^{-\delta t}\right).\]
Note that $\nu_u^+(\sigma)<\nu_v^-(\sigma)<0$ so that $\psi(y,t)$ is negative for large $y$.  In fact, the solution $\psi(x-\sigma t,t)$ vanishes at the point $y=\Theta_+(t)$ with
\[ \Theta_+(t)= \frac{1}{\nu_v^-(\sigma)-\nu_u^+(\sigma)} \log
 \left( \frac{c_1(t)D(\nu_v^-(\sigma))}{ \beta} e^{\delta t}\right),\]
which after some simplification is equivalent to
\be \Theta_+(t)=\frac{\delta }{\nu_v^-(\sigma)-\nu_u^+(\sigma)}t+\frac{1}{\nu_v^-(\sigma)-\nu_u^+(\sigma)} \log
 \left(\frac{D(\nu_v^-(\sigma))}{ \beta}+e^{-\delta t} \right).\label{eq:Thetaplus} \ee

\paragraph{Selecting $\delta$ and a decomposition of the real line}
We have shown that for any $\delta>0$, the function $\underline{v}$ is a sub-solution on the interval $[\tau_-(t),\tau_+(t)]$.  We have also  constructed for any $\delta>0$ a candidate sub-solution for the $u$ component with the properties
\[ \psi(0,t)=1, \quad \psi(\Theta_+(t),t)=0, \quad \psi(y,t)>0 \ \text{for all} \ 0\leq y<\Theta_+(t).\]  
We now will restrict to a particular choice of $\delta$.  We set 
\be \delta_c=\sqrt{\sigma^2-4}\left(\nu_v^-(\sigma)-\nu_u^+(\sigma)\right).\label{eq:delta}\ee
\begin{lem}\label{lem:Td} Let $\delta=\delta_c$.  Then there exists a $T_\delta(\sigma,q_0)>0$ such that 
\be \tau_-(t)<\Theta_+(t)<\tau_+(t), \quad \text{for all} \ \ t>T_\delta(\sigma,q_0).\label{eq:tautheta} \ee
 holds for all $t>T_\delta$. Moreover, if $\sigma$ is large enough then $\tau_-(t)>0$ for $t>T_\delta$. 
\end{lem}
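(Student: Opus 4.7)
The plan is to substitute $\delta=\delta_c$ directly into the formula (\ref{eq:Thetaplus}) for $\Theta_+$ and to compare the resulting expression term-by-term with the expansion (\ref{eq:tau}) for $\tau_\pm$. With this particular choice of $\delta$ the coefficient $\delta_c/(\nu_v^-(\sigma)-\nu_u^+(\sigma))$ of the linear-in-$t$ term in $\Theta_+(t)$ collapses to exactly $\sqrt{\sigma^2-4}$, matching the leading coefficient of $t$ in $\tau_\pm(t)$. The remaining contribution to $\Theta_+(t)$ is $(\nu_v^-(\sigma)-\nu_u^+(\sigma))^{-1}\log\!\bigl(D(\nu_v^-(\sigma))/\beta+e^{-\delta_c t}\bigr)$, which is uniformly bounded for $t\geq 0$. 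This leading-order alignment is precisely the motivation for the definition (\ref{eq:delta}): $\delta_c$ is the unique value of $\delta$ for which the linear-in-$t$ growth rates of $\Theta_+$ and $\tau_\pm$ agree.

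Having recorded this match, I would form the two differences
\[
  \tau_+(t)-\Theta_+(t)=2\sqrt{\delta_c}\,t+2R_+(t^{-1/2})t-\frac{1}{\nu_v^-(\sigma)-\nu_u^+(\sigma)}\log\!\left(\frac{D(\nu_v^-(\sigma))}{\beta}+e^{-\delta_c t}\right),
\]
and the analogous expression for $\Theta_+(t)-\tau_-(t)$, obtained by flipping the signs of the $R$ and log contributions. The implicit function theorem used in the proof of the previous lemma gives $R_\pm(\rho)\to 0$ as $\rho\to 0^+$, so $R_\pm(t^{-1/2})=o(1)$ and hence $2R_\pm(t^{-1/2})t=o(t)$; the log term is uniformly bounded in $t$. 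Each difference is therefore $2\sqrt{\delta_c}\,t+o(t)$, which is strictly positive for all $t$ larger than some $T_\delta=T_\delta(\sigma,q_0)$, giving (\ref{eq:tautheta}).

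For the second assertion I would write
\[
  \tau_-(t)=\bigl(\sqrt{\sigma^2-4}-2\sqrt{\delta_c}\bigr)t+2R_-(t^{-1/2})t.
\]
Since the remainder is $o(t)$, positivity of $\tau_-(t)$ for large $t$ reduces to positivity of the leading coefficient, i.e.\ $\sqrt{\sigma^2-4}>2\sqrt{\delta_c}$. Squaring and substituting the definition of $\delta_c$ turns this into the equivalent condition
\[
  \sqrt{\sigma^2-4}>4\bigl(\nu_v^-(\sigma)-\nu_u^+(\sigma)\bigr).
\]
The right-hand side tends to zero as $\sigma\to s_{anom}^-$, because by definition of the anomalous pinched double root $\nu_v^-(s_{anom})=\nu_u^+(s_{anom})$, whereas the left-hand side remains bounded away from zero on this range. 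Choosing $\sigma$ sufficiently close to $s_{anom}$ (but still strictly less) therefore forces the inequality, and the $o(t)$ correction $2R_-(t^{-1/2})t$ is absorbed by enlarging $T_\delta$ if necessary.

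The main obstacle is essentially bookkeeping: everything hinges on the precise choice (\ref{eq:delta}), which is engineered to cancel the leading $\sqrt{\sigma^2-4}\,t$ growth common to $\tau_\pm$ and $\Theta_+$ so that the $\pm 2\sqrt{\delta_c}\,t$ terms become the dominant separators. The one technical subtlety is verifying that both the implicit-function remainder $R_\pm(t^{-1/2})t$ and the logarithmic correction in $\Theta_+$ are genuinely of lower order than $\sqrt{\delta_c}\,t$; this follows from the continuity of $R_\pm$ at $0$ established in the preceding lemma and from the explicit $t\to\infty$ limit of the logarithm, respectively.
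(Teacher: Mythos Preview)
Your proposal is correct and follows essentially the same approach as the paper's proof, which is very terse: the paper simply instructs the reader to compare (\ref{eq:tau}) with (\ref{eq:Thetaplus}), notes that $R_\pm\to 0$ and that the logarithmic correction in $\Theta_+$ is bounded, and then observes that $\delta_c\to 0$ as $\sigma\to s_{anom}$ to handle the positivity of $\tau_-$. You have supplied exactly the details behind those remarks---in particular the explicit cancellation of the $\sqrt{\sigma^2-4}\,t$ leading term and the equivalent inequality $\sqrt{\sigma^2-4}>4(\nu_v^-(\sigma)-\nu_u^+(\sigma))$---so there is no meaningful difference in strategy.
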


\begin{proof} Recall that $\tau_\pm$ are defined for all  $t>T^*(\sigma,\delta_c,q_0)$.  Compare formulas  (\ref{eq:tau}) and (\ref{eq:Thetaplus}).  We note that $R_\pm(t)\to 0$ as $t\to\infty$ and that the correction term in (\ref{eq:Thetaplus}) is bounded and approaches a finite limit as $t\to\infty$.  This implies that there exists a $T_\delta(\sigma,q_0)\geq T^*(\sigma,\delta_c,q_0)$ such that (\ref{eq:tautheta}) holds.   Note that as $\sigma\to s_{anom}$, $\delta_c$ tends to zero and $\tau_-(t)>0$ for $t$ sufficiently large.  
\end{proof}

Still working in moving coordinate frame $y=x-\sigma t$, we decompose the real line into into four regions as follows,
\begin{eqnarray*}
I_a&=& (-\infty,0] \\
I_b&=& (0,\tau_-(t)] \\
I_c&=& (\tau_-(t),\Theta_+(t)] \\
I_d&=& (\Theta_+(t),\infty) 
\end{eqnarray*}
This decomposition holds for all $t>T_\delta(\sigma,q_0)$.  In region $I_a$, the candidate sub-solution $\underline{u}$ is given by the weakly nonlinear front $U_{r}$.  In region $I_d$, $\underline{u}$ is zero.  We emphasize the difference between $I_b$ and $I_c$.  On $I_c$ we have that the function $\underline{v}$ is a sub-solution for the $v$ component. This fact will aid in the proof that $\underline{u}$ is a sub-solution under certain restrictions on the parameters.  However, in region $I_b$, the function  $\underline{v}$ is not generally a sub-solution.  Nonetheless, in this region we can exploit the fact that $\psi(x-\sigma t,t)$ is bounded above zero to show that $\underline{u}$ remains a sub-solution in this region.

\paragraph{The sub-solution}

\begin{lem}\label{lem:usub} Recall that $v(t,x)>0$ is a fixed function of the initial data $v_0(x)$ and that there exists a $q_0(x)$ such that the conclusions of Lemmas~\ref{lem:dirichletsub}-\ref{lem:Td} hold for some fixed values of $s$ and $\sigma$ satisfying $\max\{ 2, 2\sqrt{d\alpha}\}<s<\sigma<s_{anom}$ and for  $\delta=\delta_c$.  Then there exists $T_u(s,\sigma,q_0)\geq \max\{T^*,T_\delta\}$ and a $r_c(T_u,s,\sigma,q_0)$ such that for all $r<r_c$, the function $\underline{u}(t,x)$ as defined in (\ref{eq:usubsol}) is a sub-solution for the $u$ component for all $t>T_u$.  
\end{lem}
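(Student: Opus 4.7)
The plan is to verify $N(\underline{u})\le 0$ region by region on $I_a,I_b,I_c,I_d$ and then to glue the four pieces together as a piecewise smooth, continuous sub-solution. The regions $I_a$ and $I_d$ are immediate: on $I_a$, $\underline{u}=U_r(x-st)$ satisfies the traveling-wave ODE, so $N(\underline{u})=-\beta v\le 0$; on $I_d$, $\underline{u}\equiv 0$ yields $N(\underline{u})=-\beta v\le 0$. Both hold for every choice of $r$ and $t>T_u$.

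The substantive work is on $I_b\cup I_c$, where $\underline{u}(t,x)=A(t)\psi(y,t)$ with $A(t):=U_r((\sigma-s)t)$ and $y:=x-\sigma t$. By construction $\psi_t-d\psi_{yy}-\sigma\psi_y-\alpha\psi=c_1'(t)e^{\nu_u^+(\sigma)y}+\beta\underline{v}$, so a short computation in the moving frame produces
\[
N(\underline{u})=A'(t)\psi+A(t)c_1'(t)e^{\nu_u^+(\sigma)y}+A(t)\beta\underline{v}+\alpha A(t)^2\psi^2-\beta v.
\]
Both $A'<0$ (because the weakly-decaying front $U_r$ is monotone decreasing) and $c_1'<0$ (by inspection) are strictly negative, so the first two terms will serve as ``good'' terms, and the idea is to use them to dominate the nonlinear term $\alpha A^2\psi^2$ and the inhomogeneity $A\beta\underline{v}-\beta v$ region by region.

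On $I_c$, the construction of $\underline{v}$ in Section~\ref{sec:subV} together with the parabolic comparison principle gives $\underline{v}\le v$, so (using $A\le 1$) we get $A\beta\underline{v}-\beta v\le 0$, and it remains only to verify $\alpha A^2\psi^2\le|A'|\psi$. Using $\psi\le 1$ together with the asymptotic ratio $|A'(t)|/A(t)\to(\sigma-s)|\nu_u^+(\sigma)|>0$ inherited from the weakly-decaying tail of $U_r$, this reduces to $\alpha A(t)\le(\sigma-s)|\nu_u^+(\sigma)|+o(1)$, which holds for $t\ge T_u$ provided $r_c$ is chosen negative enough that $A(T_u)=U_r((\sigma-s)T_u)$ is correspondingly small. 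On $I_b$ the bound $\underline{v}\le v$ is no longer available, so we keep only $-\beta v\le 0$ and bound $A\beta\underline{v}$ directly against $|A'|\psi$. Here the tuning $\delta=\delta_c=\sqrt{\sigma^2-4}(\nu_v^-(\sigma)-\nu_u^+(\sigma))$ is calibrated precisely so that the leading-order contributions cancel in the exponent of $\underline{v}/\psi$: for $t$ large, $\psi\ge\tfrac{1}{2}c_1(t)e^{\nu_u^+(\sigma)y}$ (since the subtracted term in $\psi$ carries an $e^{-\delta t}$ factor), and hence
\[
\frac{\underline{v}(t,y)}{\psi(t,y)}\le\frac{2}{c_1(t)}\exp\bigl((\nu_v^-(\sigma)-\nu_u^+(\sigma))y-\delta_c t\bigr).
\]
On $I_b$ the inequality $y\le\tau_-(t)\sim(\sqrt{\sigma^2-4}-2\sqrt{\delta_c})t$ turns the exponent into $-2\sqrt{\delta_c}(\nu_v^-(\sigma)-\nu_u^+(\sigma))t+O(1)\to-\infty$ since $\nu_v^-(\sigma)>\nu_u^+(\sigma)$; therefore $A\beta\underline{v}\ll|A'|\psi$ on $I_b$ for $t\ge T_u$, and the nonlinear bound is closed just as on $I_c$.

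Finally, continuity of $\underline{u}$ at $y=0$ (where $\psi(0,t)=1$) and at $y=\Theta_+(t)$ (by the definition of $\Theta_+$), together with the correct sign of the derivative jumps at the two interfaces, delivers a globally valid piecewise sub-solution via a standard gluing argument. The hard part is the estimate on $I_b$: because $\underline{v}$ is not a sub-solution there, $\beta v$ cannot be replaced by $\beta\underline{v}$, and the proof is forced to exploit the exact algebraic identity $(\nu_v^-(\sigma)-\nu_u^+(\sigma))\sqrt{\sigma^2-4}=\delta_c$ built into \eqref{eq:delta}, which is precisely what produces uniform exponential decay of $\underline{v}/\psi$ on $I_b$.
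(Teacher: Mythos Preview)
Your proposal is correct and follows essentially the same route as the paper: region-by-region verification of $N(\underline{u})\le 0$ on $I_a,\dots,I_d$, with the decisive estimate on $I_b$ coming from the calibration $\delta_c=\sqrt{\sigma^2-4}\,(\nu_v^-(\sigma)-\nu_u^+(\sigma))$ that forces $\underline{v}/\psi\to 0$ uniformly there.

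Two small corrections are worth noting. First, the asymptotic ratio $|A'(t)|/A(t)$ tends to $(\sigma-s)\,|\nu_u^+(s)|$, not $(\sigma-s)\,|\nu_u^+(\sigma)|$, since the weakly decaying front $U_r$ has tail rate $\nu_u^+(s)$; this does not affect the argument. Second, the derivative-jump condition at the interface $y=0$ is not a triviality you can fold into ``a standard gluing argument'': the paper checks it explicitly, and it hinges on $\nu_u^+(s)<\nu_u^+(\sigma)$ (true because $s<\sigma$ and $\nu_u^+$ is increasing) together with a further enlargement of $T_u$ to absorb the $e^{-\delta_c t}$ correction in $\partial_y\psi(0,t)$, and then a further shrinking of $r_c$ so that the $R(U_r)$ remainder in $U_r'=\nu_u^+(s)U_r(1+R(U_r))$ is small enough. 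Without this step the piecewise function need not be a sub-solution at the corner.
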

\begin{proof}
We need to show that $N(\underline{u})\leq 0$ for all $x$ and $t>T_u$, where
\[ N(u)=u_t-du_{xx}-\alpha u+\alpha u^2 -\beta v(t,x) .\]
We proceed by straightforward calculation.  In region $I_a$, we have
\[ N(U_r(x-st))= -\beta v(t,x)<0.\]
In region $I_d$, the calculation is similarly simple since we have $\underline{u}=0$ and
\[ N(0)=-\beta v(t,x)<0.\]
In the intermediate regimes in region $I_b$ and $I_c$ the analysis is more involved.   In regions $I_b$ and $I_c$, $N(\underline{u})$ is functionally equivalent.  We differentiate between the two regimes because we will need to argue along different lines to show that $\underline{u}$ is a sub-solution in each case.  In both regions we have,
\begin{eqnarray}
N(\underline{u})&=&(\sigma-s)U_r'(\cdot)\psi+U_r(\cdot)c_1'(t)e^{\nu_u^+(\sigma)(x-\sigma t)} \nonumber \\
&+& U_r(\cdot) \beta e^{\nu_v^-(\sigma)(x-\sigma t)}e^{-\delta_c t}-\beta v(t,x)+\alpha (U_r\psi)^2 \label{eq:Nu}
\end{eqnarray}
First, note that $U_r(\cdot)$ is the solution of a second order ordinary differential equation, $dU_r''+sU_r'+\alpha (U_r-U_r^2)$.  Phase plane analysis implies that the $U_r'$ can be written as a function of $U_r$.  Since $s>2\sqrt{d\alpha}$ we see that as $U_r$ tends to zero we have the expansion,
\be U_r'=\nu_u^+(s)U_r(1+R(U_r)),\label{eq:Usmall}\ee
where $|R(U_r)|<CU_r$ for $U_r$ sufficiently small.

Consider region $I_c$. Note that $c_1'(t)<0$.  In region $I_c$, we have that $\underline{v}<v(t,x)$ for all $t>T^*$.  Therefore, we can regroup,
\begin{eqnarray}
N(\underline{u})&=& \left( (\sigma-s)\nu_u^+(s)+R(U_r(\cdot))+\alpha U_r(\cdot)\psi\right)U_r(\cdot)\psi \nonumber \\
&+&U_r(\cdot)c_1'(t)e^{\nu_u^+(\sigma)(x-\sigma t)}  -\beta (v(t,x)-U_r(\cdot)\underline{v}(t,x)).
\label{eq:Nsimple} \end{eqnarray}
Note that $(\sigma-s)\nu_u^+(s)$ is a fixed negative number.  Since $U_r\to 0$ as $\tau\to -\infty$, there exists a $r_0(s,\sigma,q_0)$ such that for all $r<r_0(s,\sigma,q_0)$ and $t>T_\delta(\sigma,q_0)$ we have $(\sigma-s)\nu_u^+(s)+R(U_r)+\alpha U_r\psi<0$ for all $x\in I_c$.  We have thus shown that $N(\underline{u})<0$ in region $I_c$.  

We now turn our attention to region $I_b$.  Here we do not necessarily have that the function $\underline{v}$ is a sub-solution for the $v$ component.  Nonetheless,  consider (\ref{eq:Nu}).  We simplify as in (\ref{eq:Nsimple}),
\begin{eqnarray*} N(\underline{u})&=&\left( (\sigma-s)\nu_u^+(s)+R(U_r)+\alpha U_r\psi\right)U_r\psi \\
&+&U_r(\cdot)c_1'(t)e^{\nu_u^+(\sigma)(x-\sigma t)}+U_r(\cdot)\beta e^{\nu_v^-(\sigma)(x-\sigma t)}e^{-\delta_c t}  -\beta v(t,x).
\end{eqnarray*}
We have shown above that the first term can be made negative and bounded away from zero by selecting $r$ appropriately.  Since $c_1'(t)<0$, we need only to control for the positive term,
\[ U_r(\cdot)\beta e^{\nu_v^-(\sigma)(x-\sigma t)}e^{-\delta_c t}.\]
To do so, we will divide and multiply by $\psi(x-\sigma t,t)$ and show that 
\be \frac{\beta e^{\nu_v^-(\sigma)(x-\sigma t)}e^{-\delta_c t}}{\psi(x-\sigma t,t)}=\frac{\beta }{c_1(t) e^{(\nu_u^+(\sigma)-\nu_v^-(\sigma))(x-\sigma t)}e^{\delta_c t}- \frac{ \beta}{D(\nu_v^-(\sigma))} },\label{eq:keyterm}\ee
can be made arbitrarily small for $(t,x)\in I_b$ by restricting to $t$ sufficiently large if and only if
\[ (\nu_u^+(\sigma)-\nu_v^-(\sigma))(x-\sigma t)+\delta_c t>0,\] 
for all $(t,x)\in I_b$. At the left boundary of $I_b$, i.e. when $x-\sigma t=0$, positivity follows from the positivity of $\delta_c$.  At the right bounday, we substitute $x-\sigma t=\tau_-(t)$ and find 
\begin{eqnarray*} (\nu_u^+(\sigma)-\nu_v^-(\sigma))\tau_-(t)+\delta_c t&=& (\nu_u^+(\sigma)-\nu_v^-(\sigma)) \left(-2\nu_v^-(\sigma)t-2\sqrt{\delta_c}t+2R_-(t)t\right) \\ &+& 2\nu_v^-(\sigma)(\nu_u^+(\sigma)-\nu_v^-(\sigma)) t \\
&=& t(\nu_u^+(\sigma)-\nu_v^-(\sigma))\left( -2\sqrt{\delta_c}+2R_-(t)\right) 
\end{eqnarray*}
Recall that $R_-(t)\to 0$ as $t\to \infty$.  Since $\nu_u^+(\sigma)-\nu_v^-(\sigma)<0$ we find that the exponent is positive and (\ref{eq:keyterm}) can be made arbitrarily small by restricting to $t$ sufficiently large.  This implies that $N(\underline{u})$ can be factored as 
\[ N(\underline{u})=\left( (\sigma-s)\nu_u^+(s)+R(U_r)+\alpha U_r\psi +\frac{\underline{v}}{\psi}\right)U_r\psi+U_r(\cdot)c_1'(t)e^{\nu_u^+(\sigma)(x-\sigma t)}  -\beta v(t,x).
\]
Since $(\sigma-s)\nu_u^+(s)+R(U_r)+\alpha U_r\psi$ is negative and bounded away from zero and since $\underline{v}/\psi$ can be made uniformly arbitrarily small, there exists a $T_u(s,\sigma,q_0)\geq T_\delta(\sigma,q_0)$ such that $\underline{u}$ is a sub-solution in region $I_b$.  

To complete the proof of the lemma, we need to verify that $\underline{u}$ is a sub-solution at the matching point $x-\sigma t =0$.  We must show that the left $x$ derivative of $\underline{u}$ at the right boundary of region $I_a$ is steeper than the corresponding right derivative at the left boundary of $I_b$.  That is, we require
\be \lim_{y\to 0^-} \frac{\partial \underline{u}}{\partial y}<\lim_{y\to 0^+} \frac{\partial \underline{u}}{\partial y}<0.\label{eq:Dcond} \ee

The spatial  derivative of $\underline{u}$ in region $I_a$ is $U_r'(x-st)$.  Making use of the expansion (\ref{eq:Usmall}), the limit of the derivative from the left is
\[ \lim_{y\to 0^-} \frac{\partial \underline{u}}{\partial y}= \nu_u^+(s)U_r((\sigma-s)t)(1+R(U_r)).\]
We compare this to the the spatial derivative in region $I_b$ at the matching point $x-\sigma t=0$.  This derivative is $U_r((\sigma-s)t)\partial_x \psi(x-st,t)$.  At the matching point the limit from the right is,
\[ \lim_{y\to 0^+} \frac{\partial \underline{u}}{\partial y}=U_r((\sigma-s)t)\left(\nu_u^+(\sigma)c_1(t) - \frac{\nu_v^-(\sigma) \beta}{D(\nu_v^-(\sigma))} e^{-\delta_c t}\right). \]
After substituting the expression for $c_1(t)$, the derivative becomes
\[ U_r((\sigma-s)t)\left(\nu_u^+(\sigma)+ \frac{\left(\nu_u^+(\sigma)-\nu_v^-(\sigma)\right) \beta}{D(\nu_v^-(\sigma))} e^{-\delta_c t}\right).\]
Note that $\nu_u^+(\sigma)<\nu_v^-(\sigma)$.  Since $s<\sigma$, we also have $\nu_u^+(s)<\nu_u^+(\sigma)<0$.
Selecting $T_u$ perhaps even larger we can enforce,
\[ \nu_u^+(s)<\nu_u^+(\sigma)+ \frac{\left(\nu_u^+(\sigma)-\nu_v^-(\sigma)\right) \beta}{D(\nu_v^-(\sigma))} e^{-\delta_c t},\]
from which we may select $r_c(T_u,s,\sigma,q_0)\leq r_0(s,\sigma,q_0)$ so that the
the derivative condition (\ref{eq:Dcond}) is satisfied.

\end{proof}

\subsection{Super-solutions}
The sub-solutions constructed in the previous section will allow us to bound the spreading speed of (\ref{eq:main}) from below.  To demonstrate that the selected spreading speed is actually the anomalous one we will need super solutions that bound the spreading speed from above.  In the following lemma, super-solutions are constructed from solutions of the linearized equations.  

\begin{lem}~\label{lem:supersol} Fix $v_0(x)$ and therefore the solution to the initial value problem, $v(t,x)$. Let $s>s_{anom}$. There exists  $C_v>0$ and $C_u^*(C_v)$ such that for all $C_u>C_u^*$ there exists a $\theta(C_u,C_v)$ for which
\[ \bar{u}(t,x)=\left\{ \begin{array}{cc} C_ue^{\nu_u^+(s)(x-st)}+C_v\kappa e^{\nu_v^-(s)(x-st)} & x-st\geq \theta  \\
\frac{1}{2}+\frac{1}{2}\sqrt{1+\frac{4\beta}{\alpha}} & x-st<\theta \end{array}\right. ,\]
with
\[ \kappa=-\frac{\beta}{d_u(\nu_v^-(s),0)},\]
is a super-solution for the $u$ component.  
\end{lem}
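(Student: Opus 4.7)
The plan is to verify the inequality $N(\bar u) := \bar u_t - d\bar u_{xx} - \alpha\bar u(1-\bar u) - \beta v \geq 0$ on each of the two smooth pieces of $\bar u$ and then to match them at $y = x - st = \theta$ in a way that preserves the super-solution property in the distributional sense. The first preliminary I would establish is a pointwise exponential upper bound on $v$: because $v$ satisfies the scalar Fisher-KPP equation with $0 \leq v_0 \leq 1$, the comparison principle yields $0 \leq v \leq 1$ and $v \leq \tilde v$, where $\tilde v$ solves the linearization $\tilde v_t = \tilde v_{xx} + \tilde v$. Since $(\nu_v^-(s))^2 + s\nu_v^-(s) + 1 = 0$, the function $C_v e^{\nu_v^-(s)(x-st)}$ is an exact solution of the linearization, and choosing $C_v > 0$ large enough that $v_0(x) \leq C_v e^{\nu_v^-(s) x}$ for all $x$ (possible because $v_0$ is bounded and $\nu_v^-(s) < 0$) gives $v(t,x) \leq C_v e^{\nu_v^-(s)(x-st)}$ for all $t \geq 0$.

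With $C_v$ fixed, I would then check $N(\bar u) \geq 0$ piecewise. On the constant region $y < \theta$, $\bar u \equiv u^* := \tfrac12 + \tfrac12\sqrt{1 + 4\beta/\alpha}$ is the positive root of $\alpha u(1-u) + \beta = 0$, so $N(\bar u) = -\alpha u^*(1-u^*) - \beta v = \beta(1-v) \geq 0$. On the exponential region $y > \theta$, the identity $d(\nu_u^+(s))^2 + s\nu_u^+(s) + \alpha = 0$ annihilates the $C_u$ contribution to $\bar u_t - d\bar u_{xx} - \alpha\bar u$, while the $C_v\kappa$ term contributes $-C_v\kappa\, d_u(\nu_v^-(s),0) e^{\nu_v^-(s)y} = C_v\beta e^{\nu_v^-(s)y}$ by the defining identity for $\kappa$. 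Adding the non-negative quadratic $\alpha\bar u^2$ and invoking the $v$-bound above produces $N(\bar u) \geq C_v\beta e^{\nu_v^-(s)y} - \beta v \geq 0$.

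Finally, I would handle the matching at $y = \theta$. Continuity requires $C_u e^{\nu_u^+(s)\theta} + C_v\kappa e^{\nu_v^-(s)\theta} = u^*$. In regime $\mathrm{IV}$, using the monotonicities of $\nu_u^\pm$ and $\nu_v^-$ in $s$ together with the pinching $\nu_v^-(s_{anom}) = \nu_u^+(s_{anom})$ and $d < 1$, one verifies $\nu_u^-(s) < \nu_v^-(s) < \nu_u^+(s) < 0$ for all $s > s_{anom}$, which places $\nu_v^-(s)$ strictly between the roots of $d_u(\cdot, 0)$; hence $d_u(\nu_v^-(s),0) < 0$ and $\kappa > 0$. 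Both terms in $\bar u$ are then positive and strictly decreasing in $\theta$, so the left-hand side of the matching equation descends monotonically from $+\infty$ to $0$ as $\theta$ runs over $\mathbb{R}$, yielding a unique solution $\theta(C_u, C_v)$ as soon as $C_u$ exceeds a threshold $C_u^*(C_v)$ chosen so that $\theta$ is a well-defined real number in the required range. At the corner, the left derivative of $\bar u$ is $0$ and the right derivative is strictly negative, so the distributional singular part of $\bar u_{xx}$ at $\theta$ is a negative Dirac mass, $-d\bar u_{xx}$ contributes a positive delta to $N(\bar u)$, and the super-solution inequality is preserved across $\theta$. I expect the main subtlety to be not any single calculation but the clean verification of $\kappa > 0$ in regime $\mathrm{IV}$; once that sign is established, the remaining steps are direct substitutions.
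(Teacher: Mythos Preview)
Your proposal is correct and follows essentially the same approach as the paper's proof: bound $v$ above by $C_v e^{\nu_v^-(s)(x-st)}$, check $N(\bar u)\geq 0$ on each piece by direct substitution, and use $\kappa>0$ (equivalently $d_u(\nu_v^-(s),0)<0$) to guarantee positivity of the exponential piece. The minor differences are that you take the global exponential bound on $v$ directly rather than via the paper's $\min\{1,C_v e^{\nu_v^-(s)y}\}$ super-solution (which is why the paper needs $\theta>y_v$ and hence a genuine threshold $C_u^*$, whereas your argument works for any $C_u>0$), and you explicitly address the corner at $y=\theta$ via the sign of the Dirac mass in $\bar u_{xx}$, a point the paper leaves implicit.
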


\begin{pf}
A similar result was established in \cite{holzer-anomalous}, we adapt the arguments to the current situation.  First, note that $\bar{v}(t,x)=\min\{1,C_ve^{\nu_v^-(s)(x-st)}\}$ is a super-solution for the $v$ component and given $v_0(x)$ we can select $C_v$ so that $\bar{v}(t,x)\geq v(t,x)$ for all $t$.  With $C_v$ fixed, we find that these two curves intersect at the point $y_v=\frac{1}{\nu_v^-(s)}\log\frac{1}{C_v}$ in the moving frame $y=x-st$.  

Observe that when $s>s_{anom}$ we have $\nu_v^-(s)<\nu_u^+(s)$ and $d_u(\nu_v^-(s),0)<0$.  This implies that $\kappa>0$ and the sum of exponentials in the definition of $\bar{u}(t,x)$ is positive.  

Now, given $C_v$ we can find a $C_u^*(C_v)$ such that for any $C_u>C_u^*$, the intersection point $\theta(C_u,C_v)$ is greater than $y_v$.  Factor $N(u)$ as follows,
\[ N(u)=u_t-du_{xx}-\alpha u+\alpha u^2+\beta(\bar{v}(t,x)-v(t,x))-\beta \bar{v}(t,x).\]

We argue in pieces.  First, for $x-st<\theta$ we have that the constant function $1$ is a super-solution for the $v$ component.  Then evaluate $N(u)$ at a constant value $u_c$, 
\[ N(u_c)=-\alpha u_c+\alpha u_c^2-\beta +\beta(1-v(t,x)).\]
Taking $u_c=\frac{1}{2}+\frac{1}{2}\sqrt{1+\frac{4\beta}{\alpha}}$ to be a root of the polynomial  $-\alpha u+\alpha u^2-\beta$ and noting $v(t,x)<1$ we have that $N(-\alpha u_c+\alpha u_c^2-\beta )>0$.  

We now consider $x-st>\theta$.  Here the functional form of $\bar{u}(t,x)$ is the sum of exponentials given above.  The exponential terms are chosen precisely so that the linear terms vanish and all that is left is,
\[ N(\bar{u}(t,x)=\alpha u^2+\beta(\bar{v}(t,x)-v(t,x)),\]
which is clearly positive.  This completes the proof.

\end{pf}

\section{Spreading speeds from Heaviside step function initial data -- the proof of Theorem~\ref{thm:main}}\label{sec:proof}
We now prove Theorem~\ref{thm:main}.

\begin{proof}
Consider any $s$ satisfying $\max\{2,2\sqrt{d\alpha}\}<s<s_{anom}$.  Fix $s<\sigma<s_{anom}$ and select $\delta=\delta_c$ as in Lemma~\ref{lem:Td}.  Recall also that we require $\sigma$ large enough so that $\tau_-(t)>0$.  Without loss of generality we may assume that the region where $v(0,x)>0$ intersects the positive half line.  Consider any compactly supported function $q_0(x)$, not identically zero, whose support lies in $\mathbb{R}^+$ such that $q_0(x)\leq v(0,x)$ for all $x>0$.  Then Lemma~\ref{lem:dirichletsub} gives the existence of a sub-solution for the $v$ component that is supported on the positive half line with the properties listed in Lemma~\ref{lem:dirichletsub}.  

Now evolve the solution further until $t=T_u(s,\sigma,q_0)$.  Lemma~\ref{lem:usub} now gives the existence of a one parameter family of sub-solutions depending on $s,\sigma$ and $q_0(x)$ parameterized by $r\leq r_c(T_u,s,\sigma,q_0)$.  The maximum principle implies that $u(T_u,x)>0$ for all $x$.   We now claim that we can select a particular $r\in\mathbb{R}$ with $r\leq r_c$ so that the sub-solution (\ref{eq:usubsol}) satisfies $\underline{u}(T_u,x)<u(T_u,x)$.   

We do this in two parts.  We will require sub-solutions for the $u$ component in isolation.  These can be constructed from subcritical nonlinear traveling fronts, see for example \cite{aronson78}.  Let $u_{osc}(x-(2\sqrt{d\alpha}-\e)t)$ be one such nonlinear traveling front solution, cut off and set equal to zero for all $x$ to greater than its smallest zero.  We can select a particular translate of this front so that $u(t,x)>u_{osc}(t,x)$ for all $0<t<T_u(\sigma.q_0)$.  At $t=T_u$, if the support of $u_{osc}(t,x)$ intersects $I_b$, then we claim that there exists a $r_1\leq r_c(s,\sigma,q_0)$ such that for all $r<r_1$, we have $\underline{u}(T_u,x)<u_{osc}(T_u,x)<u(T_u,x)$ for all points in $I_a$.  This follows from the decay rates at $-\infty$.  The unstable eigenvalue of the linearization of the traveling wave equation $dU''+sU'+\alpha(U-U^2)=0$ at $u=1$ has a single unstable eigenvalue 
\[ e(s)=-\frac{s}{2d}+\frac{1}{2d}\sqrt{s^2+4d\alpha},\]
from which we observe that $e$ is a decreasing function of $s$.  This implies that the front $u_{osc}$ converges to $u=1$ at a faster rate than $U_r$ and for fixed $t$, $U_r<u_{osc}$ on some half interval near $x=-\infty$.  If at $t=T_u$ the support of $u_{osc}$ includes points in $I_b$, then we can find a $r\leq r_c$ so that $U_r<u_{osc}$ for all points in $I_b$.  

If the support of $u_{osc}(t,x)$ does not intersect $I_b$, we can nonetheless find a translate of $U_r$ such that $\underline{u}<u(T_u,x)$. This follows since for $t=T_u$ the region where $u_{osc}$ is equal to zero intersected with the region $I_a$ is compact.  The solution $u(t,x)>0$ here and bounded from below so by selecting $r$ smaller if necessary we find $r$ such that $U_r(x-sT_u)<u(T_u,x)$ for all points in $I_a$.  

A similar arguments hold for $y\in[0,\Theta_+(T_u)]$.  Note that $\Theta_+$ is independent of $r$.  Again, $u(t,x)$ is bounded from below on this interval and $\underline{u}(T_u,x)$ is bounded above by $U_r((\sigma-s)T_u)$ which implies that we can select $r$ again smaller so that $\underline{u}(T_u,x)<u(T_u,x)$ for all $x$.

Recall the definition of the invasion point in Theorem~\ref{thm:main}.  For the sub-solution $\underline{u}(t,x)$, the invasion point is given explicitly as $\kappa(t)=st+r$ and since $\underline{u}(t,x)<u(t,x)$ for all $x$ and all $t>T_0$, we have that $s_{sel}\geq s$.  This construction can be performed for all $\max\{2,2\sqrt{d\alpha}\}<s<\sigma<s_{anom}$ and we therefore find that $s_{sel}\geq s_{anom}$.

Conversely, let $s>s_{anom}$ and consider super-solutions of the form given in Lemma~\ref{lem:supersol}.  For fixed $v_0(x)$, we can find a $C_v$ such that $\bar{v}(t,x)$ is a super-solution for the $v$ component.  Since $u_0(x)$ is also compactly supported we can find $C_u>0$ and sufficiently large so that $\bar{u}(t,x)\geq u_0(x)$ and therefore $\bar{u}(t,x)\geq u(t,x)$ for all $t$.  This implies that $s_{sel}\leq s$ for any $s>s_{anom}$.  

We have therefore shown that $s_{anom}\leq s_{sel}\leq s_{anom}$ and therefore $s_{sel}=s_{anom}$. 

\end{proof}

\section{Discussion}\label{sec:discussion}

We have shown that anomalous spreading speeds are observed in the nonlinear system (\ref{eq:main}) for those parameter values in the {\em relevant} regime, i.e. for $(d,\alpha)\in\mathrm{IV}$.  We show that Heaviside step function initial data propagates with the anomalous speed.  We now conclude with some observations and discussion of generalizations of the current work and future directions for research.

The methods utilized here generalize directly to some other coupled reaction-diffusion equations.  One example is the following system comprised of a heat equation coupled to a Fisher-KPP equation,
\begin{eqnarray*}
u_t &=& du_{xx}+\alpha(u-u^2)+\beta v \\
v_t &=& v_{xx} -\gamma v,
\end{eqnarray*}
where $\gamma>0$ enforces pointwise exponential decay of the $v$ component.   This example was introduced in \cite{holzer-anomalous}, we refer the reader there for more details. The temptation is to expect that the selected spreading speed for the $u$ component is the Fisher-KPP speed $2\sqrt{d\alpha}$.  As is the case with (\ref{eq:main}), there exists a subset of parameters for which anomalous spreading speeds exist for the linear system.  The proof of Theorem~\ref{thm:main} is readily adapted to prove that the selected spreading speed for the nonlinear system is also the anomalous speed.  

It would also be of interest to know whether a general result concerning cooperative systems like the ones in \cite{weinberger02,li05} could be adapted to the anomalous case.  Consider a general system of coupled reaction-diffusion equation whose linearization is block triangular.  One major obstacle is that as the number of equations is increased the number of possible pinched double root combinations also increases.  Thus, it would be challenging to construct a general theory that accounts for which of these pinched double roots are relevant or irrelevant.  

Also of interest would be generalizations to larger classes of nonlinearities.  Simple modifications would be amenable to the analysis presented here; for example $u(1-u)$ could easily be replaced with $f(u)$ where $f$ is of KPP type.  The inhomogeneous term $\beta v$ could be replaced with a term $\beta v g(u,v)$ with $g(u,v)>0$ for $u,v>0$ and $|g(u,v)|=1+\O(|u|+|v|)$.  Further generalizations where the $v$ component is allowed to depend on the $u$ component would be more challenging.  In a different direction there are also large classes of equations for which the selected spreading speed is a nonlinear speed.  It would be interesting to quantify the role that anomalous linear speeds could play in this context.  

Finally, we mention that the proof of Theorem~\ref{thm:main} requires the existence of a comparison principle, although the phenomena is observed in non-cooperative systems.  Generalizations to systems without comparison would require new techniques.  Some insights can be gleaned from a linear analysis.  We again mention section 8.3 of \cite{holzer-criteria} for a discussion of the differences between relevant and irrelevant double roots as well as implications to nonlinear phenomena.  For example, supposing the existence of a traveling front solution propagating slower than the linear spreading speed it is shown that relevant double roots enforce the existence of an unstable resonance pole.  Building a more complete theory is made more challenging by the fact that anomalous speeds inherently involve different components spreading at different speeds and interacting over larger spatial scales where a standard traveling wave analysis may not be applicable.

\bibliographystyle{abbrv}
\bibliography{MHmaster}

\end{document}